\numberwithin{equation}{section}
\newtheorem{Th}{Theorem}[section]
\newtheorem{Lem}{Lemma}[section]
\newtheorem{Cor}{Corolary}[section]
\theoremstyle{definition}
\newtheorem{Con}{Condition}[section]
\newtheorem*{Ack}{Acknowledgement}
\newcommand{\op}{\oplus}
\newcommand{\AC}{\textup{\textrm{AC}}}
\newcommand{\co}{\mathop{\textup{\textrm{co}}}}
\newcommand{\clco}{\mathop{\textup{\textrm{clco}}}}
\newcommand{\Equil}{\mathop{\textup{\textrm{Equil}}}}
\newcommand{\varmid}{\mathrel{}\middle|\mathrel{}}
\title[Resolvents of equilibrium problems in a geodesic space]{Resolvents of equilibrium problems in a complete geodesic space with negative curvature}
\author[Y.~Kimura]{Yasunori Kimura}\address[Yasunori Kimura]{Department of Information Science, Toho University, Miyama, Funa\-bashi, Chiba 274--8510, Japan}\email{yasunori@is.sci.toho-u.ac.jp}
\author[T.~Ogihara]{Tomoya Ogihara}\address[Tomoya Ogihara]{Department of Information Science, Toho University, Miyama, Funa\-bashi, Chiba 274--8510, Japan}\email{6522005o@st.toho-u.jp}
\date{}
\keywords{CAT(-1) space, equilibrium problems, resolvents}
\begin{document}
\maketitle
\begin{abstract}
In this paper, we propose a resolvent of an equilibrium problem in a geodesic space with negative curvature having the convex hull finite property. We prove its well-definedness as a single-valued mapping whose domain is whole space, and study the fundamental properties.
\end{abstract}
\section{Introduction}
Let $K$ be a nonempty set and $f\colon K \times K \to \mathbb{R}$. An equilibrium problem is defined as to find $z_0 \in K$ such that
\[
f(z_0,y) \geq 0
\]
for all $y \in K$. Equilibrium problems were first studied by Blum and Oettli in Banach spaces; see \cite{BlumOettli}. In 2005, Combettes and Hirstoaga proposed the resolvent of equilibrium problems in Hilbert spaces.
\begin{Th}[Combettes and Hirstoaga \cite{Com}]
Let $H$ be a Hilbert space and $K$ a nonempty closed convex subset of $H$. Let $f\colon K \times K \to \mathbb{R}$ and $S_f$ the set of solutions to the equilibrium problem for $f$. Suppose the following conditions:
\begin{itemize}
\item $f(y,y) = 0$ for all $y \in K$;
\item $f(y,z) + f(z,y) \leq 0$ for all $y,z \in K$;
\item $f(y,\cdot)\colon K \to \mathbb{R}$ is lower semicontinuous and convex for every $y \in H$;
\item $f(\cdot,z)\colon K \to \mathbb{R}$ is upper hemicontinuous for every $y \in K$.
\end{itemize}
Then the resolvent operator $J_f$ defined by
\[
J_fx = \left\{z \in K\varmid \inf_{y \in K}(f(z,y) + \left<z-x,y-z\right>)\geq 0\right\}
\]
has the following properties:
\begin{enumerate}
\item[\textup{(i)}] $D(J_f) = X$;
\item[\textup{(ii)}] $J_f$ is single-valued and firmly nonexpansive;
\item[\textup{(iii)}] $F(J_f) = S_f$;
\item[\textup{(iv)}] $S_f$ is closed and convex.
\end{enumerate}
\end{Th}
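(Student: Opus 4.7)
The plan is to prove (ii) first, since the nonemptiness of $J_f x$ gives (i), and once (ii) is in hand (iii) is immediate from the definition and (iv) follows from (iii) together with firm nonexpansiveness. Fix $x \in H$ and introduce the perturbed bifunction
\[
g(z,y) = f(z,y) + \langle z - x, y - z\rangle, \qquad z,y \in K.
\]
A direct check shows that $g$ inherits from $f$ the identity $g(y,y)=0$, convexity and lower semicontinuity of $g(z,\cdot)$, and upper hemicontinuity of $g(\cdot,y)$; moreover, the monotonicity strengthens to $g(y,z)+g(z,y) \leq -\|y-z\|^2$, since $\langle y-x,z-y\rangle+\langle z-x,y-z\rangle=-\|y-z\|^2$. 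Thus $z_0 \in J_fx$ is exactly the problem of finding $z_0 \in K$ with $g(z_0,y) \geq 0$ for every $y \in K$.

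Existence, which is the main obstacle, I would handle by a Ky Fan KKM argument. Set $G(y) = \{z \in K : g(z,y) \geq 0\}$. Using convexity of $g(z,\cdot)$ together with $g(y,y) = 0$ shows that $\{G(y)\}_{y\in K}$ is a KKM family. Since the values need not be closed, I pass to the Minty companion sets $G'(y) = \{z\in K : g(y,z)\leq 0\}$, which are closed by lower semicontinuity of $g(y,\cdot)$; monotonicity gives $G(y)\subseteq G'(y)$, and the classical Minty trick (testing at $y_t = (1-t)z + ty$ and letting $t\to 0^+$, using upper hemicontinuity of $g(\cdot, y)$ together with $g(y_t,y_t)=0$ and convexity of $g(y_t,\cdot)$) yields the reverse inclusion at the level of intersections. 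Coercivity comes from the quadratic structure of the perturbation, which lets me restrict to a large enough closed ball intersected with $K$ and invoke weak compactness to produce a common point.

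Uniqueness and firm nonexpansiveness fall out of one algebraic step. For $z_i \in J_f x_i$, substituting $y=z_{3-i}$ into the defining inequality for $z_i$ and adding the two inequalities produces
\[
f(z_1,z_2) + f(z_2,z_1) + \langle z_1-x_1, z_2-z_1\rangle + \langle z_2-x_2, z_1-z_2\rangle \geq 0,
\]
so by monotonicity of $f$ we get $\langle x_1-x_2, z_1-z_2\rangle \geq \|z_1-z_2\|^2$, which is firm nonexpansiveness. Setting $x_1 = x_2$ forces $z_1 = z_2$, which simultaneously furnishes single-valuedness and the uniqueness needed in the existence step.

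Part (iii) is essentially tautological: $z = J_f z$ reduces, after the inner-product term vanishes at $x=z$, to $f(z,y)\geq 0$ for all $y\in K$, i.e.\ $z \in S_f$. For (iv), $S_f=F(J_f)$ reduces the claim to the standard fact that the fixed-point set of a firmly nonexpansive self-map on a Hilbert space is closed (by continuity) and convex (apply the firm nonexpansive inequality to two fixed points and any convex combination of them as input).
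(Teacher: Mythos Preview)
The paper does not supply its own proof of this statement: Theorem~1.1 is quoted in the Introduction as a known result of Combettes and Hirstoaga, with the proof deferred to \cite{Com}. There is therefore nothing in the paper to compare your argument against line by line. Your sketch is the standard one (KKM family, Minty linearisation to pass to closed sets, coercivity from the quadratic perturbation to reduce to a weakly compact set, and the add-two-inequalities trick for firm nonexpansiveness), and it is essentially the same scheme that the paper transplants to the $\mathrm{CAT}(-1)$ setting in its proof of Theorem~3.1 via Lemmas~3.1--3.3: Lemma~3.1 is the KKM step, Lemma~3.2 is the Minty equivalence, and the coercivity and pairing arguments in the proof of Theorem~3.1(i)--(ii) match your outline with $\cosh d(x,\cdot)$ playing the role of the quadratic term. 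One minor wording point: in (iv) the phrase ``apply the firm nonexpansive inequality to two fixed points and any convex combination of them as input'' is a bit loose; the cleanest route is that firm nonexpansiveness implies nonexpansiveness, and the fixed-point set of a nonexpansive map on a Hilbert space is closed and convex by the standard two-ball intersection argument.
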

Recently, the resolvents of equilibrium problems were proposed in geodesic spaces. In a CAT$(0)$ space with the convex hull finite property, it was proposed by Kimura and Kishi in 2018; see \cite{KimuraKishi}. In  an admissible CAT$(1)$ space with the convex hull finite property, it was proposed by Kimura in 2021; see \cite{Kimura2021}.

In this paper, we propose a different type of the resolvent of the equilibrium problems and a $\Delta$-convergence theorem using the proximal point algorithm in a CAT$(-1)$ space with the convex hull finite property. In Section 2, we see the definition of  a CAT$(\kappa)$ space for $\kappa \in \mathbb{R}$, fundamental properties of a CAT$(0)$ space and a CAT$(-1)$ space, and  some important properties to prove the main results. In Section 3, we prove that the resolvent of equilibrium problems is well-defined and show some properties of this resolvent in a CAT$(-1)$ space with the convex hull finite property.

\section{Preliminaries}
Let $(X,d)$ be a metric space, $T$ a mapping of $X$ into itself. We denote by $F(T)$ the set of all fixed points of $T$. For $x,y \in X$, a mapping $c$ of $[0,d(x,y)]$ into $X$ is called \emph{a geodesic with endpoints $x,y \in X$} if it satisfies the following:
\begin{itemize}
\item $c(0) = x$, $c(d(x,y)) = y$;
\item $d(c(s),c(t)) = |s-t|$ for all $s,t \in [0,d(x,y)]$.
\end{itemize}

For $ D \in \mathopen]0,\infty]$, we call $X$ \emph{a $D$-geodesic space} if a geodesic with endpoints $x$ and $y$ exists for all $x,y \in X$ with $d(x,y) < D$. In what follows, when $X$ is a $D$-geodesic space, every geodesic in $X$ whose length is less than $D$ is always supposed to be unique. We call such $X$ \emph{a uniquely $D$-geodesic space}. Then, the image of a geodesic with endpoints $x$ and $y$ is denoted by $[x,y]$ for all $x,y \in X$ with $d(x,y)<D$. For $x,y \in X$ with $d(x,y) < D$ and $t \in [0,1]$, there exists a unique point $z \in [x,y]$ such that $d(x,z) = (1-t)d(x,y)$ and $d(y,z) = td(x,y)$. We denote it by $z = tx \op (1-t)y$.
For $\kappa \in \mathbb{R}$, a $2$-dimensional model space $M_{\kappa}^2$ with a curvature $\kappa$ is  defined by 
\[
M_{\kappa}^2 =
\begin{cases}
\dfrac{1}{\sqrt{\kappa}}\mathbb{S}^2 & (\kappa > 0);\\[10pt]
\mathbb{E}^2 & (\kappa = 0);\\[5pt]
\dfrac{1}{\sqrt{-\kappa}}\mathbb{H}^2 & (\kappa < 0),
\end{cases}
\]
where $\mathbb{E}^2$ is the two-dimensional Euclidean space, $\mathbb{H}^2$ is the two-dimensional hyperbolic space and $\mathbb{S}^2$ is the two-dimensional unit sphere. The diameter of $M_{\kappa}^2$ is denoted by $D_{\kappa}$, that is, 
\[
D_{\kappa}=
\begin{cases}
\dfrac{\pi}{\sqrt{\kappa}} & (\kappa > 0);\\[5pt]
\infty & (\kappa \leq 0).
\end{cases}
\]
For $x,y,z \in X$, \emph{a geodesic triangle $\triangle(x,y,z)$} with vertices $x,y,z \in X$ is defined by $[x,y] \cup [y,z] \cup [z,x]$. For $x,y,z \in X$ with $d(x,y) + d(y,z) + d(z,x) < 2D_{\kappa}$, \emph{a comparison triangle} to $\triangle(x,y,z) \subset X$ of vertices $\bar{x},\bar{y},\bar{z} \in M_{\kappa}^2$ is defined by $[\bar{x},\bar{y}]\cup[\bar{y},\bar{z}]\cup[\bar{z},\bar{x}]$ with $d(x,y) = d_{M_{\kappa}^2}(\bar{x},\bar{y})$, $d(y,z) = d_{M_{\kappa}^2}(\bar{y},\bar{z})$ and $d(z,x)=d_{M_{\kappa}^2}(\bar{z},\bar{x})$. It is denoted by $\overline\triangle(\bar{x},\bar{y},\bar{z}) \subset M_{\kappa}^2$. A point $\bar{p} \in [\bar{x},\bar{y}]$ is called \emph{a comparison point} for $p \in [x,y]$ if $d(x,p) = d_{{M_{\kappa}^2}}(\bar{x},\bar{p})$. If for all $p,q \in \triangle (x,y,z)$ and their comparison points $\bar{p},\bar{q} \in \overline{\triangle}(\bar{x},\bar{y},\bar{z})$, the inequality $d(p,q) \leq d_{M_{\kappa}^2}(\bar{p},\bar{q})$ holds for all triangles in $X$, then we call $X$ a CAT$(\kappa)$ space. In general, for real numbers $\kappa,\kappa'$ with $\kappa' < \kappa$, a CAT$(\kappa')$ space is a CAT$(\kappa)$ space; see \cite{Bridson}. In a CAT$(-1)$ space, the following inequalities holds:
\begin{Lem}[Bridson and Haeflinger \cite{Bridson}]\label{PL}
Let $X$ be a complete \textup{CAT$(-1)$} space, $x,y,z \in X$ and $t \in [0,1]$. Then, the following inequality holds:
\begin{align*}
&\cosh d(tx \op (1-t)y,z)\sinh d(x,y)\\
&\leq \cosh d(x,z)\sinh td(x,y) + \cosh d(y,z)\sinh(1-t)d(x,y).
\end{align*}
\end{Lem}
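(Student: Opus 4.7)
The plan is to reduce the inequality to a sharp identity in the model space $\mathbb{H}^2$ via the CAT$(-1)$ comparison property, and then verify that identity by a direct computation in the hyperboloid model. The case $x = y$ makes both sides vanish, so I assume $x \neq y$ and set $d = d(x,y)$ and $m = tx \op (1-t)y$. I would choose a comparison triangle $\overline\triangle(\bar x, \bar y, \bar z) \subset \mathbb{H}^2$, and let $\bar m \in [\bar x, \bar y]$ be the comparison point of $m$, so that $d_{\mathbb{H}^2}(\bar x, \bar m) = (1-t) d$ and $d_{\mathbb{H}^2}(\bar y, \bar m) = t d$. By the CAT$(-1)$ comparison axiom, $d(m, z) \leq d_{\mathbb{H}^2}(\bar m, \bar z)$, and since $\cosh$ is nondecreasing on $[0, \infty)$, it is enough to prove the inequality for the model-space quadruple $\bar x, \bar y, \bar z, \bar m$.

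For the computation in $\mathbb{H}^2$, I would pass to the hyperboloid model, where points are unit timelike vectors $u$ with $\langle u, u \rangle = -1$ and distance is recovered via $\cosh d(u, v) = -\langle u, v \rangle$ with respect to the Minkowski bilinear form. The point $\bar m$ on the geodesic $[\bar x, \bar y]$ satisfying $d_{\mathbb{H}^2}(\bar x, \bar m) = (1-t)d$ admits the explicit representation
\[
\bar m = \frac{\sinh t d}{\sinh d}\, \bar x + \frac{\sinh (1-t) d}{\sinh d}\, \bar y,
\]
as one verifies by checking that this vector is unit timelike and that $-\langle \bar m, \bar x \rangle = \cosh((1-t)d)$, via the identity $\sinh d = \sinh t d \cosh (1-t) d + \cosh t d \sinh (1-t) d$. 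Taking the Minkowski inner product of the above with $\bar z$, multiplying through by $-\sinh d$, and using that the comparison triangle preserves the distances $d(x, z)$ and $d(y, z)$, yields the equality
\begin{align*}
& \cosh d_{\mathbb{H}^2}(\bar m, \bar z)\, \sinh d \\
& = \sinh t d \cdot \cosh d(x, z) + \sinh (1-t) d \cdot \cosh d(y, z),
\end{align*}
which together with the reduction above gives the claimed inequality.

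There is no serious obstacle: the argument reduces to a model-space identity that in fact becomes an equality in $\mathbb{H}^2$, and the CAT$(-1)$ hypothesis enters only through the step $d(m, z) \leq d_{\mathbb{H}^2}(\bar m, \bar z)$. The bulk of the work is confirming the hyperboloid formula for $\bar m$, which is the standard arclength parametrization of a hyperbolic geodesic expressed as a linear combination of its endpoints, and the subsequent algebra is routine.
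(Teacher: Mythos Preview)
Your argument is correct. The reduction to the model space via the CAT$(-1)$ comparison axiom, followed by the explicit hyperboloid computation, is the standard route to this inequality, and your verification of the formula
\[
\bar m = \frac{\sinh t d}{\sinh d}\,\bar x + \frac{\sinh (1-t) d}{\sinh d}\,\bar y
\]
together with the inner-product step is accurate.

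As for comparison with the paper: the paper does not supply its own proof of this lemma. It is stated as a preliminary result with attribution to Bridson and Haefliger \cite{Bridson}, and is used as a black box later (notably in the proof of Theorem~\ref{WelldefinedResolvent}(ii)). So there is nothing in the paper to compare your approach against; your write-up simply fills in what the authors left to the reference, and it does so by what is essentially the canonical argument one finds for this hyperbolic law-of-cosines-type inequality.
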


\begin{Cor}[Bridson and Haeflinger \cite{Bridson}]\label{coshconvex}
Let $X$ be a complete \textup{CAT$(-1)$} space, $x,y,z \in X$ and $t \in [0,1]$. Then, the following inequality holds:
\[
\cosh d(tx \op (1-t)y,z) \leq t\cosh d(x,z) + (1-t)\cosh d(y,z).
\]
\end{Cor}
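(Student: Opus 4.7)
The plan is to derive the corollary directly from Lemma \ref{PL} by dividing through by $\sinh d(x,y)$ and applying a basic convexity property of $\sinh$. First I would handle the degenerate case $d(x,y)=0$ separately: then $x=y$, so $tx\op(1-t)y=x$, and both sides of the desired inequality reduce to $\cosh d(x,z)$, giving equality.

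For the generic case $d(x,y)>0$, I would divide the conclusion of Lemma \ref{PL} by $\sinh d(x,y)>0$ to obtain
\begin{equation*}
\cosh d(tx\op(1-t)y,z)
\leq \cosh d(x,z)\,\frac{\sinh td(x,y)}{\sinh d(x,y)}
+\cosh d(y,z)\,\frac{\sinh(1-t)d(x,y)}{\sinh d(x,y)}.
\end{equation*}
The key ingredient is then the inequality $\sinh(t s)\leq t\sinh s$ for $s\geq 0$ and $t\in[0,1]$, together with its companion $\sinh((1-t)s)\leq(1-t)\sinh s$. Substituting $s=d(x,y)$ and using that $\cosh d(x,z),\cosh d(y,z)\geq 0$ to preserve the direction of the inequality, the right-hand side is bounded above by $t\cosh d(x,z)+(1-t)\cosh d(y,z)$, which is exactly the desired conclusion.

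The only nontrivial step is the auxiliary inequality $\sinh(ts)\leq t\sinh s$. I would justify it by the convexity of $\sinh$ on $[0,\infty)$ (its second derivative $\sinh$ is nonnegative there) together with $\sinh 0=0$: for $t\in[0,1]$,
\begin{equation*}
\sinh(ts)=\sinh\bigl(t\cdot s+(1-t)\cdot 0\bigr)\leq t\sinh s+(1-t)\sinh 0=t\sinh s.
\end{equation*}
Applying this twice (once to $ts$ and once to $(1-t)s$) completes the argument. I do not anticipate any real obstacle here; the only thing to be careful about is separating the case $d(x,y)=0$ so that the division by $\sinh d(x,y)$ is legitimate.
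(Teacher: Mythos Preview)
Your argument is correct: the case split on $d(x,y)=0$, the division by $\sinh d(x,y)$, and the convexity bound $\sinh(ts)\le t\sinh s$ on $[0,\infty)$ combine exactly as you describe to yield the corollary from Lemma~\ref{PL}. The paper itself does not supply a proof of this corollary---it is stated as a citation from Bridson and Haefliger---so there is no in-paper argument to compare against; your derivation from Lemma~\ref{PL} is the natural one and would be the standard way to obtain it.
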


Let $X$ be a CAT$(-1)$ space and $T$ a mapping of $X$ into itself. Then $T$ is \emph{firmly hyperbolically nonspreading} if the inequality
\[
(\cosh d(x,Tx) + \cos d(y,Ty))\cosh d(Tx,Ty) \leq \cosh d(Tx,y) + \cosh d(x,Ty)
\]
holds for $x,y \in X$. Further, $T$ is \emph{hyperbolically nonspreading} \cite{KajimuraKimura2019} if the inequality
\[
2\cosh d(Tx,Ty) \leq \cosh d(x,Ty) + \cosh d(Tx,y)
\]
holds for all $x,y \in X$. A mapping $T$ is \emph{quasinonexpansive} if $F(T)$ is nonempty and the inequality $d(Tx,z) \leq d(x,z)$ holds for $x\in X$ and $z \in F(T)$. Then, firmly hyperbolically nonspreading mappings are hyperbolically nonspreading,  and hyperbolically nonspreading mappings with nonempty $F(T)$ are quasinonexpansive.

Let $X$ be a CAT(0) space, $\{x_{\alpha}\}$ a bounded net in $X$, and 
\[
r(x,\{x_\alpha\}) = \limsup_{\alpha}d(x,x_\alpha) 
\]
for $x \in X$. Then \emph{the asymptotic radius $r(\{x_\alpha\})$ of $\{x_\alpha\}$} is defined by
\[
r(\{x_\alpha\}) = \inf_{x \in X}r(x,\{x_\alpha\}).
\]
A point $x_0 \in X$ is an \emph{asymptotic center of $\{x_\alpha\}$} if $r(\{x_\alpha\}) = r(x_0,\{x_\alpha\})$. The set of all asymptotic centers is denoted by $\AC(\{x_\alpha\})$. We say that $\{x_\alpha\}$ is \emph{$\Delta$-convergent} to $x_0\in X$ if $x_0$ is a unique asymptotic center of all subnets of $\{x_\alpha\}$. We know that every bounded net $\{x_\alpha\}$ in a complete CAT$(0)$ space has a unique asymptotic center; see  \cite{Kirk2006}
\begin{Lem}[Kirk and Panyanak \cite{Kirk2008}]
Let $X$ be a complete \textup{CAT(0)} space. Then every bounded net has a subnet which is $\Delta$-convergent to $x_0 \in X$.
\end{Lem}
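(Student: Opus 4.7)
The plan is to follow the classical selection argument of Kirk and Panyanak, adapted to nets. Given a bounded net $\{x_\alpha\}$ in the complete \textup{CAT(0)} space $X$, I would extract a subnet whose asymptotic radius is as small as possible among all subnets of $\{x_\alpha\}$, and then verify via the \textup{CN} inequality of a \textup{CAT(0)} space that every further subnet shares the same unique asymptotic center. The target subnet of the statement is this minimising subnet, and the point $x_0$ is its asymptotic center.

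First I would record two monotonicity facts for later use: if $\{y_\gamma\}$ is a subnet of $\{x_\alpha\}$, then $r(x,\{y_\gamma\}) \leq r(x,\{x_\alpha\})$ for every $x \in X$, and consequently $r(\{y_\gamma\}) \leq r(\{x_\alpha\})$. Combined with the existence and uniqueness of the asymptotic centre in a complete \textup{CAT(0)} space that is recalled just above the statement, every subnet of $\{x_\alpha\}$ has a unique asymptotic centre.

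Next, set
\[
r^{*} = \inf\{r(\{y_\gamma\}) : \{y_\gamma\} \text{ is a subnet of } \{x_\alpha\}\}
\]
and produce a subnet $\{u_\delta\}$ with $r(\{u_\delta\}) = r^{*}$; let $u_0$ denote its unique asymptotic centre. This selection step is where I expect the main difficulty to lie: for general nets the sequential diagonal trick that works for sequences is unavailable, so realising the infimum requires a transfinite or Zorn-type construction, for instance iterating the passage to subnets along a decreasing sequence of radii and forming a suitable cofinal subnet at limit stages.

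Finally, I would verify $\Delta$-convergence of $\{u_\delta\}$ to $u_0$. Let $\{v_\epsilon\}$ be any subnet of $\{u_\delta\}$ and let $v_0$ be its asymptotic centre. Monotonicity yields $r(\{v_\epsilon\}) \leq r(\{u_\delta\}) = r^{*}$ while the definition of $r^{*}$ forces $r(\{v_\epsilon\}) \geq r^{*}$, hence $r(\{v_\epsilon\}) = r^{*}$. Setting $m = \tfrac{1}{2} u_0 \op \tfrac{1}{2} v_0$, the \textup{CN} inequality in a \textup{CAT(0)} space gives
\[
d(m,v_\epsilon)^{2} \leq \tfrac{1}{2} d(u_0,v_\epsilon)^{2} + \tfrac{1}{2} d(v_0,v_\epsilon)^{2} - \tfrac{1}{4} d(u_0,v_0)^{2}.
\]
Taking $\limsup_\epsilon$ and using $r(u_0,\{v_\epsilon\}) \leq r(u_0,\{u_\delta\}) = r^{*}$ together with $r(v_0,\{v_\epsilon\}) = r^{*}$, the left side is at least $(r^{*})^{2}$ while the right side is at most $(r^{*})^{2} - \tfrac{1}{4} d(u_0,v_0)^{2}$, forcing $u_0 = v_0$. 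Since this holds for every subnet of $\{u_\delta\}$, the subnet $\{u_\delta\}$ is $\Delta$-convergent to $u_0$.
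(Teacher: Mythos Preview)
The paper does not prove this lemma at all; it is stated as a citation of Kirk and Panyanak \cite{Kirk2008} and used as a black box, so there is no argument in the paper to compare yours against.

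Regarding your proposal on its own merits: your verification step is fine, but the selection step is a genuine gap that your sketch does not close. You want a subnet $\{u_\delta\}$ realising $r^{*}=\inf r(\{y_\gamma\})$, and you suggest obtaining it by a transfinite or Zorn-type iteration of ``pass to a subnet with smaller radius''. The obstacle is that a chain of subnets (each a subnet of the previous) need not admit a common refining subnet: the index sets can grow without any cofinal diagonal available, so there is no obvious lower bound for a chain and Zorn's lemma does not apply. The countable-diagonal trick that rescues the sequence case is exactly what fails here, and your phrase ``forming a suitable cofinal subnet at limit stages'' hides precisely this unresolved point.

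The clean way to handle nets avoids the infimum altogether: pass to a \emph{universal} subnet $\{u_\delta\}$ of $\{x_\alpha\}$ (which exists by the ultrafilter lemma). For a universal bounded net the real net $d(x,u_\delta)$ converges for every $x$, so $r(x,\{u_\delta\})=\lim_\delta d(x,u_\delta)$, and every further subnet $\{v_\epsilon\}$ satisfies $r(x,\{v_\epsilon\})=r(x,\{u_\delta\})$ for all $x$. Hence the function being minimised in the definition of the asymptotic centre is literally the same for $\{u_\delta\}$ and for each of its subnets, and uniqueness of the asymptotic centre in a complete \textup{CAT(0)} space gives $\Delta$-convergence immediately---your CN-inequality computation is then not even needed.
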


Let $X$ be a geodesic space and $f$ a function of $X$ into $\mathbb{R}$. A function $f$ is said to be \emph{lower semicontinuous} if the set $\left\{x \in X\mid f(x) \leq a\right\}$ is closed for all $a \in \mathbb{R}$. If $f$ is continuous, then it is lower semicontinuous. A function $f$ is said to be \emph{convex} if
\[
f(\alpha x \op (1-\alpha)y) \leq \alpha f(x) + (1-\alpha)f(y)
\]
holds for all $x,y \in X$ and $\alpha \in \mathopen]0,1\mathclose[$. A function $f$ is said to be \emph{upper hemicontinuous} if the inequality
\[
f(x) \geq \limsup_{t \to 0^+}f((1-t)x \op ty)
\]
 holds for all $x,y \in X$ and $t \in \mathopen]0,1\mathclose[$.
 \begin{Lem}[Mayer \cite{Mayer}]\label{Mayer}
Let $X$ be a complete \textup{CAT(0)} space and $f$ a lower semicontinuous convex function from $X$ into $\mathbb{R}$. Then there exists a nonnegative real number $c$ which depends on only $f$ such that
\[
\liminf_{d(u,v) \to \infty}\frac{f(u)}{d(u,v)} \geq -c
\]
for all $v \in X$.
\end{Lem}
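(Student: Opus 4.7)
My plan is to argue by contradiction, first reducing the uniform-in-$v$ statement to a single base point and then using the $\Delta$-compactness of bounded sequences together with the $\Delta$-lower semicontinuity of lower semicontinuous convex functions to reach an absurdity.

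First I would observe that it is enough to produce such a constant $c$ for one fixed base point $v_0 \in X$. Indeed, for any other $v \in X$, the triangle inequality gives $|d(u,v) - d(u,v_0)| \leq d(v,v_0)$, so $d(u,v_0)/d(u,v) \to 1$ as $d(u,v) \to \infty$; writing
\[
\frac{f(u)}{d(u,v)} = \frac{f(u)}{d(u,v_0)} \cdot \frac{d(u,v_0)}{d(u,v)}
\]
and handling the cases $f(u) \geq 0$ and $f(u) < 0$ separately transfers the lower bound from $v_0$ to $v$ with the same constant.

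Next, fixing $v_0$ and supposing for contradiction that no such $c$ exists, for each positive integer $n$ I would select $u_n \in X$ with $d(u_n,v_0) \geq n$ and $f(u_n) < -n\,d(u_n,v_0)$. Setting $t_n = 1/d(u_n,v_0)$ and $w_n = (1-t_n)v_0 \op t_n u_n$ puts $w_n$ on the sphere $\{x \in X : d(x,v_0) = 1\}$, and convexity of $f$ gives
\[
f(w_n) \leq (1-t_n)f(v_0) + t_n f(u_n) < (1-t_n)f(v_0) - n,
\]
so $f(w_n) \to -\infty$. Since $\{w_n\}$ is bounded, the Kirk--Panyanak lemma provides a subsequence $\{w_{n_k}\}$ that is $\Delta$-convergent to some $w^* \in X$.

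To finish I would invoke that each sublevel set $\{x \in X : f(x) \leq M\}$ is closed and convex, hence $\Delta$-closed; so for every $M \in \mathbb{R}$ we eventually have $w_{n_k}$ in it, and the $\Delta$-limit $w^*$ then lies there too. This forces $f(w^*) = -\infty$, contradicting $f\colon X \to \mathbb{R}$. The main obstacle lies exactly in this last step: justifying that closed convex subsets of a complete CAT$(0)$ space are $\Delta$-closed, equivalently, that lower semicontinuous convex functions are $\Delta$-lower semicontinuous. This is the essential geometric input and is proved via the metric projection onto a closed convex set together with the uniqueness of asymptotic centers, using the CAT$(0)$ inequality to show $d(y,x_n) \leq d(x,x_n)$ whenever $y$ is the projection of $x$ onto the set containing the $x_n$.
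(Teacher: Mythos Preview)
The paper does not supply its own proof of this lemma: it is stated with attribution to Mayer \cite{Mayer} and then invoked as a black box in the proof of Theorem~\ref{WelldefinedResolvent}. There is therefore nothing in the paper to compare your argument against.

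For what it is worth, your proposal is a correct and standard way to establish the result. The reduction to a single base point via the triangle inequality is fine (one should note that the ratio $d(u,v_0)/d(u,v)$ is bounded above and below by positive constants as $d(u,v)\to\infty$, which is enough to transfer the same $c$). The core idea---pulling the far-away points $u_n$ back along geodesics to a fixed sphere around $v_0$ and using convexity to force $f(w_n)\to-\infty$ on that bounded set---is exactly the right move. The only nontrivial ingredient, as you identify, is that closed convex subsets of a complete CAT$(0)$ space are $\Delta$-closed; your sketch via the metric projection and the CAT$(0)$ inequality is the standard justification, and with that in hand the contradiction follows.
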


Let $X$ be a complete CAT$(0)$ space and $C$ a subset of $X$. Then $C$ is said to be \emph{$\Delta$-compact} if every net $\{x_{\alpha}\}$ of $C$ has a subnet $\Delta$-converging to a point in $C$. It is known that every bounded closed convex set in CAT$(0)$ space is $\Delta$-compact; see \cite{Kirk2008}. Let $E$ be a set of $X$. Then \emph{a convex hull of $E$} is defined by
\[
\co E = \bigcup_{n=0}^{\infty}X_n,
\]
where $X_0 = E$ and $X_n = \left\{tu_{n-1} \op (1-t)v_{n-1} \varmid u_{n-1},v_{n-1} \in X_{n-1}, t \in[0,1]\right\}$.

A complete CAT$(0)$ space has \emph{the convex hull finite property} if every continuous mapping $T$ of $\clco E$ into itself has a fixed point for all finite subsets $E$ of $X$, where $\clco E$ is the closure of $\co E$; see \cite{Shabanian2011}.

The following lemmas show that the KKM lemma holds on a complete CAT$(0)$ space with the convex hull finite property.
\begin{Lem}[Niculescu and Roven\c{t}a \cite{Niculescu}]\label{KKM}
Let $X$ be a complete \textup{CAT$(0)$} space having the convex hull finite property and $C$ a nonempty subset of $X$. Suppose that for every $x \in C$, there exists a closed subset $M(x)$ of $C$ such that $\co E\subset\bigcup_{x \in E}M(x)$ for all finite subsets $E$ of $C$. Then $\bigcap_{i=1}^nM(y_{i})$ is nonempty for every finite subset $\{y_1,y_2,\dots,y_n\}$ of $X$.
\end{Lem}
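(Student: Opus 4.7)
My plan is to transplant the classical Knaster--Kuratowski--Mazurkiewicz argument to this setting, using the convex hull finite property of $X$ in place of Brouwer's fixed point theorem. (I read the conclusion as referring to finite subsets of $C$, since $M(\cdot)$ is defined only on $C$.)

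First I would fix a finite set $E=\{y_{1},\dots,y_{n}\}\subset C$ and argue by contradiction, supposing $\bigcap_{i=1}^{n}M(y_{i})=\emptyset$. Since each $M(y_{i})$ is closed, the distance functions $\lambda_{i}(z):=d(z,M(y_{i}))$ are continuous on $X$, and the assumed empty intersection forces $\sigma(z):=\sum_{i=1}^{n}\lambda_{i}(z)>0$ for every $z\in X$. This lets me define continuous nonnegative weights $\mu_{i}(z):=\lambda_{i}(z)/\sigma(z)$ on $\clco E$ summing to $1$.

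Next I would build a continuous self-map $T\colon\clco E\to\clco E$ that sends $z$ to a $(\mu_{i}(z))$-weighted geodesic combination of $y_{1},\dots,y_{n}$. Concretely, writing $s_{k}(z):=\mu_{1}(z)+\dots+\mu_{k}(z)$, one may take $b_{1}(z):=y_{1}$ and
\[
b_{k}(z):=\frac{s_{k-1}(z)}{s_{k}(z)}\,b_{k-1}(z)\op\frac{\mu_{k}(z)}{s_{k}(z)}\,y_{k}
\]
whenever $s_{k}(z)>0$, and put $T(z):=b_{n}(z)$. Since $\clco E$ is closed under $\op$ by construction, $T$ takes values in $\clco E$; the convex hull finite property then supplies a fixed point $z_{0}\in\clco E$ of $T$.

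To conclude, set $I:=\{i:\mu_{i}(z_{0})>0\}$. The construction places $z_{0}=T(z_{0})$ in $\clco\{y_{i}:i\in I\}$, and the covering hypothesis $\co\{y_{i}:i\in I\}\subset\bigcup_{i\in I}M(y_{i})$ extends to this closure because each $M(y_{i})$ is closed; hence $z_{0}\in M(y_{j})$ for some $j\in I$, forcing $\lambda_{j}(z_{0})=0$ and therefore $\mu_{j}(z_{0})=0$, contradicting $j\in I$. The main obstacle I foresee lies in the middle step: one must verify that the iterated $\op$-combination is genuinely continuous on all of $\clco E$, the delicate situation being where several partial sums $s_{k}(z)$ collapse to $0$. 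In practice this is handled either by treating the degenerate strata by hand or by replacing the ad hoc iteration with the canonical continuous barycenter operation $\argmin_{x}\sum_{i=1}^{n}\mu_{i}(z)d(x,y_{i})^{2}$, which is well defined in a complete CAT(0) space.
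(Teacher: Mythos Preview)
The paper does not give its own proof of this lemma; it is quoted from Niculescu and Roven\c{t}a \cite{Niculescu} as a preliminary result, so there is no in-paper argument to compare against. Your sketch is precisely the classical KKM argument transplanted to the CAT$(0)$ setting---partition of unity from the distances $d(\cdot,M(y_i))$, a continuous self-map of $\clco E$, a fixed point supplied by the convex hull finite property in place of Brouwer, and a support contradiction---and this is, as far as I know, exactly the strategy of \cite{Niculescu}.

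Your diagnosis of the one delicate point is correct: the iterated two-point combination $b_k$ is not obviously continuous where several leading partial sums $s_k$ vanish. Of your two proposed repairs, the barycenter map $z\mapsto\argmin_{x}\sum_{i}\mu_i(z)\,d(x,y_i)^2$ is the clean one in a complete CAT$(0)$ space: it is single-valued by strict convexity of $d(\cdot,y_i)^2$, continuous in the weights, takes values in $\clco E$, and its value lies in $\clco\{y_i:\mu_i(z)>0\}$, which is all you need once you close up the covering hypothesis using that a finite union of the closed sets $M(y_i)$ is closed. Your reading of the conclusion as ranging over finite subsets of $C$ (rather than $X$) is also the correct one.
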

\begin{Lem}[Kimura and Kishi \cite{KimuraKishi}]\label{KK}
Let $X$ be a complete \textup{CAT$(0)$} space having the convex hull finite property and $C$ a $\Delta$-compact subset of $X$. Then if $\{F_i\}_{i \in I}$ is a family of $\Delta$-closed subsets of $C$ with the finite intersection property, then $\bigcap_{i \in I}F_i$ is nonempty.
\end{Lem}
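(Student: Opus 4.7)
The statement is the $\Delta$-topology analogue of the classical equivalence between compactness and the finite-intersection-property characterization of closed sets. The plan is to adapt that classical argument using $\Delta$-compactness in place of topological compactness and $\Delta$-closedness in place of topological closedness.

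First I would set up the standard test net. Let $\mathcal{J}$ denote the collection of finite subsets of $I$, directed by inclusion. Using the finite intersection property, for each $J \in \mathcal{J}$ choose $x_J \in \bigcap_{i \in J} F_i$; since every $F_i \subset C$, this defines a net $\{x_J\}_{J \in \mathcal{J}}$ in $C$. The $\Delta$-compactness of $C$ then supplies a subnet $\{x_{J_\beta}\}_{\beta \in B}$ that $\Delta$-converges to some $x_0 \in C$.

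The remaining task is to show $x_0 \in F_{i_0}$ for an arbitrary fixed $i_0 \in I$. By cofinality of the subnet map, there exists $\beta_0 \in B$ with $J_\beta \supset \{i_0\}$, hence $x_{J_\beta} \in F_{i_0}$, for every $\beta \succeq \beta_0$. The tail $\{x_{J_\beta}\}_{\beta \succeq \beta_0}$ is itself a subnet lying entirely in $F_{i_0}$, and because asymptotic radii are defined via $\limsup$ and therefore depend only on tail behaviour, this tail shares the asymptotic center $x_0$ with the original subnet and still $\Delta$-converges to $x_0$. Applying the $\Delta$-closedness of $F_{i_0}$ then yields $x_0 \in F_{i_0}$, and since $i_0$ was arbitrary, $x_0 \in \bigcap_{i \in I} F_i$.

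The main difficulty is bookkeeping about nets rather than geometry: one must verify that truncating a $\Delta$-convergent subnet to a tail preserves $\Delta$-convergence to the same limit, so that $\Delta$-closedness of $F_{i_0}$ genuinely applies to a net that is contained in $F_{i_0}$. No curvature-specific inequality is needed; the argument relies only on the existence and uniqueness of asymptotic centers of bounded nets in a complete \textup{CAT}$(0)$ space, which is already available from the material preceding the lemma.
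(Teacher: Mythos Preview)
The paper does not supply its own proof of this lemma; it is quoted as a known result from Kimura and Kishi \cite{KimuraKishi}, so there is no in-paper argument to compare against. Your proposal is the standard compactness-via-finite-intersection-property argument transported to the $\Delta$-setting, and it is correct as written: the key steps---building the net indexed by finite subsets of $I$, extracting a $\Delta$-convergent subnet in $C$ via $\Delta$-compactness, and using cofinality to trap a tail of the subnet inside each $F_{i_0}$---are all sound, and your remark that a tail of a $\Delta$-convergent net is again $\Delta$-convergent to the same limit (because every subnet of the tail is already a subnet of the original net) is exactly the justification needed to invoke $\Delta$-closedness. Note also that the convex hull finite property plays no role in your argument, nor should it: the lemma is a purely ``topological'' fact about $\Delta$-compactness and $\Delta$-closed sets, and that hypothesis is presumably carried along only because the ambient result in \cite{KimuraKishi} is packaged that way.
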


\begin{Con}\label{bifunction}
Let $X$ be a complete \textup{CAT$(-1)$} space and $K$ a nonempty closed convex subset of $X$. We suppose that a bifunction $f\colon K \times K \to \mathbb{R}$ satisfies the following conditions:
\begin{enumerate}
\item[(i)] $f(x,x) = 0$ for all $x \in K$;
\item[(ii)] $f(x,y) + f(y,x) \leq 0$ for all $x,y \in K$;
\item[(iii)] for every $x \in K$, $f(x,\cdot)\colon K \to \mathbb{R}$ is lower semicontinuous and convex;
\item[(iv)]for every $y \in K$, $f(\cdot,y)\colon K \to \mathbb{R}$ is upper hemicontinuous.
\end{enumerate}
\end{Con}

The set of solutions to the equilibrium problem for $f$ is denoted by $\Equil f$, that is, 
\[
\Equil f = \left\{z \in K\varmid \inf_{y \in K}f(z,y) \geq 0\right\}.
\]
\section{Main Result}
In this section, we show that the resolvent of equilibrium problems is well-defined in a complete CAT$(-1)$ space.
\begin{Lem}\label{compact}
Let $X$ be a complete \textup{CAT$(-1)$} space having the convex hull finite property, $K$ a nonempty closed convex subset of $X$, and $C$ a nonempty $\Delta$-compact closed convex subset of $K$. Suppose that $f\colon K \times K \to \mathbb{R}$ satisfies \textup{Condition \ref{bifunction}}. Then, there exists $z \in C$ such that 
\[
f(y,z) \leq \cosh d(x,y) - \cosh d(x,z)
\]
for all $x \in X$ and $y \in C$.
\end{Lem}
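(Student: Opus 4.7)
The strategy is the standard KKM-style existence argument: for each $y \in C$, produce a closed convex ``KKM set'' $M(y)$, verify the covering property with respect to $\co$, and extract a common point via $\Delta$-compactness. The element $x \in X$ plays only a parametric role in the construction, so the argument establishes the conclusion pointwise in $x$.

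For each $y \in C$ I would set
\[
M(y) = \left\{ z \in C \varmid f(y,z) + \cosh d(x,z) \leq \cosh d(x,y) \right\}.
\]
First, I would check that $M(y)$ is closed and convex. The map $z \mapsto f(y,z)$ is lower semicontinuous and convex by Condition \ref{bifunction}(iii), and $z \mapsto \cosh d(x,z)$ is continuous and convex by Corollary \ref{coshconvex}. Hence $M(y)$ is a sublevel set of a lower semicontinuous convex function, so it is closed and convex, and in particular $\Delta$-closed in the complete CAT$(0)$ space.

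The crux is the KKM covering property: for every finite $E = \{y_1,\dots,y_n\} \subset C$,
\[
\co E \subset \bigcup_{i=1}^{n} M(y_i).
\]
I would argue by contradiction. If some $z_0 \in \co E$ lies outside every $M(y_i)$, then $f(y_i,z_0) + \cosh d(x,z_0) > \cosh d(x,y_i)$ for each $i$, and Condition \ref{bifunction}(ii) (antisymmetry) then gives
\[
f(z_0,y_i) + \cosh d(x,y_i) < \cosh d(x,z_0), \qquad i=1,\dots,n.
\]
Now put $h(y) = f(z_0,y) + \cosh d(x,y)$; Condition \ref{bifunction}(iii) combined with Corollary \ref{coshconvex} shows that $h$ is convex on $K$. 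A routine induction on the level in the presentation $\co E = \bigcup_{n \geq 0} X_n$ then yields $h(z_0) \leq \max_{1 \leq i \leq n} h(y_i) < \cosh d(x,z_0)$, while Condition \ref{bifunction}(i) forces $h(z_0) = 0 + \cosh d(x,z_0) = \cosh d(x,z_0)$, a contradiction.

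With the KKM condition in place, Lemma \ref{KKM} gives the finite intersection property for $\{M(y) : y \in C\}$, and since $C$ is $\Delta$-compact and each $M(y)$ is $\Delta$-closed, Lemma \ref{KK} produces a point $z \in \bigcap_{y \in C} M(y)$, which satisfies the desired inequality. I expect the main obstacle to be precisely the KKM verification: simultaneously marshalling the antisymmetry of $f$, the convexity of $\cosh d(x,\cdot)$, and the inductive description of $\co E$ so that the needed inequality $h(z_0) \leq \max_i h(y_i)$ drops out, while also taking care that the convex function $h$ depends on the candidate point $z_0$ and not on the intermediate midpoints appearing in the induction. Once that is settled, the two cited lemmas finish the argument mechanically.
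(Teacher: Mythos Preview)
Your proposal is correct and follows essentially the same KKM/$\Delta$-compactness scheme as the paper: define the same sets $M(y)$, verify the KKM covering, then apply Lemma~\ref{KKM} and Lemma~\ref{KK}. The only cosmetic difference is in the KKM step: the paper keeps the bifunction $h(y,u)=f(y,u)+\cosh d(x,u)-\cosh d(x,y)$, proves by induction that $h(y_i,p)\le\sum_j\mu_j h(y_i,y_j)$, and derives the contradiction from the symmetrized double sum $\sum_{i,j}\mu_i\mu_j(h(y_i,y_j)+h(y_j,y_i))\le 0$; you instead apply antisymmetry first and reduce to the single convex function $y\mapsto f(z_0,y)+\cosh d(x,y)$, which is a slightly cleaner packaging of the same ingredients.
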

\begin{proof}
For arbitrarily fixed $x \in X$, let 
\[
h(y,u) = f(y,u)+\cosh d(x,u)-\cosh d(x,y)
\]
for $u,y \in K$.
Put
\[
M(y) = \left\{u \in C \varmid h(y,u) \leq 0\right\}
\]
for all $y \in C$. Then $M(y)$ is $\Delta$-closed. We show that $\bigcap_{y \in C}M(y)$ is nonempty. Let $\{y_i\}_{i \in N}$ be a subset of $C$ with a finite index set $N$ and $I$ a nonempty subset of $N$. Put $D = \{y_{i}\}_{i \in I}$. Let $p \in \co D$ arbitrarily, where $\co D$ is defined by
\[
\co D = \bigcup_{n = 0}^{\infty}F_n
\]
wherever $F_0 = D$ and $F_n = \left\{tu_{n-1} \op (1-t)v_{n-1} \varmid u_{n-1},v_{n-1} \in F_{n-1}, t \in [0,1]\right\}$ for $n \in \mathbb{N}$. We first show the following inequarities by induction. For $n \in \mathbb{N}\cup\{0\}$, if $p \in F_n$, then there exists $\left\{\mu_j\in\mathbb{R}\varmid j \in I\right\}$ such that
\begin{align}
h(y_i,p) \leq \sum_{j \in I}\mu_jh(y_i,y_j)\label{1}
\end{align}
where $\sum_{j \in I}\mu_j = 1$ and $\mu_j \geq 0$. If $n = 0$, then for $p \in F_0$, we get $p = y_{i_0}$ for some $i_0 \in I$. Put
\[
\mu_{j}=
\begin{cases}
1 & (j = i_0);\\
0 & (j \neq i_0).
\end{cases}
\]
Then, we get 
\[
h(y_i,p)=h(y_i,y_{i_0})=\sum_{j\in I}\mu_jh(y_i,y_j).
\]
Suppose \eqref{1} holds for fixed $n \in \mathbb{N}$. Suppose $p \in F_{n+1}$. Then there exists $u,v \in F_n$ and $t \in [0,1]$ such that $p = tu \op (1-t)v$ and we get
\begin{align*}
h(y_i,p) \leq th(y_i,u) + (1-t)h(y_i,v).
\end{align*}
Hence there exist $\lambda_i, \nu_i \in [0,1]$ such that
\begin{align*}
th(y_i,u) + (1-t)h(y_i,v) 
&\leq t\sum_{j\in I}\lambda_jh(y_i,y_j) + (1-t)\sum_{j\in I}\nu_jh(y_i,y_j)\\
&=\sum_{j\in I}(t\lambda_j+(1-t)\nu_j)h(y_i,y_j)
\end{align*}
and $\sum_{j \in I} \lambda_j = \sum_{j \in I}\nu_j = 1$. Put $\mu_j = t\lambda_j + (1-t)\nu_j$. Then we have \eqref{1} holds for $n+1$. Therefore it holds for every $n \in \mathbb{N}$. Suppose that $h(y_i,p)>0$ for all $i \in I$. Then we get
\[
0 <\sum_{i\in I}\mu_ih(y_i,p) \leq \sum_{i\in I}\sum_{j\in I}\mu_i\mu_jh(y_i,y_i) =\frac{1}{2}\sum_{i\in I}\sum_{j\in I}\mu_i\mu_j(h(y_i,y_j) + h(y_j,y_i)) \leq 0
\]
and this is a contradiction. We have there exists $i \in I$ such that $h(y_i,p)\leq0$ and $p \in M(y_i)$. Since $p \in \co D$ is arbitrary, we obtain $\co D \subset \bigcup_{i \in I}M(y_i)$. By Lemma \ref{KKM}, we get $\bigcap_{i \in N}M(y_i)$ is nonempty. Using  Lemma \ref{KK}, we obtain $\bigcap_{y \in C}M(y)$ is nonempty. Therefore we can take $ z \in \bigcap_{y \in C}M(y)$, which satisfies the desired result.
\end{proof}

\begin{Lem}\label{bifunctionslemma}
Let $X$ be a complete \textup{CAT$(-1)$} space having the convex hull finite property, $K$ a nonempty closed convex subset of $X$, and $C$ a nonempty $\Delta$-compact and closed convex subset of $K$. Suppose that $f\colon K \times K \to \mathbb{R}$ satisfies \textup{Condition \ref{bifunction}}. Then for $x \in X$ and $z \in C$, the following $(1)$ and $(2)$ are equivalent:
\begin{enumerate}
\item[\textup{$(1)$}] $f(y,z) \leq \cosh d(x,y) - \cosh d(x,z)$ for all $y \in C$;
\item[\textup{$(2)$}] $0 \leq f(z,y) + \cosh d(x,y) - \cosh d(x,z)$ for all $y\in C$.
\end{enumerate}
\end{Lem}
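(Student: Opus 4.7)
The first implication $(2)\Rightarrow(1)$ is essentially immediate from the monotonicity axiom (ii) of Condition~\ref{bifunction}: if $f(z,y) + \cosh d(x,y) - \cosh d(x,z) \geq 0$, then $-f(z,y) \leq \cosh d(x,y) - \cosh d(x,z)$, and combining with $f(y,z) \leq -f(z,y)$ from (ii) gives~(1).

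For the converse $(1)\Rightarrow(2)$, I would fix $y \in C$ and introduce the geodesic perturbation $y_t = (1-t)z \op ty$ for $t \in \mathopen]0,1\mathclose[$, which lies in $C$ by convexity. The plan is to apply the assumed inequality~(1) at $y_t$ (not at $y$) and transfer the information to $f(z,y)$ in the limit $t \to 0^+$. Three tools combine: the second-argument convexity (iii) together with (i) to split $f(y_t,y_t)=0$ into a sum involving $f(y_t,z)$ and $f(y_t,y)$; hypothesis (1) applied at $y_t$ to bound $f(y_t,z)$ from above; and Corollary~\ref{coshconvex} to control $\cosh d(x,y_t)$ linearly in $t$.

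Concretely, (i) and (iii) yield $0 = f(y_t,y_t) \leq (1-t) f(y_t,z) + t f(y_t,y)$, so $t f(y_t,y) \geq (1-t)(-f(y_t,z))$. Hypothesis (1) at $y_t$ gives $-f(y_t,z) \geq \cosh d(x,z) - \cosh d(x,y_t)$, while Corollary~\ref{coshconvex} provides $\cosh d(x,z) - \cosh d(x,y_t) \geq t(\cosh d(x,z) - \cosh d(x,y))$. Chaining these and dividing by $t > 0$ produces $f(y_t,y) \geq (1-t)(\cosh d(x,z) - \cosh d(x,y))$. Taking $\limsup$ as $t \to 0^+$ and invoking the upper hemicontinuity (iv) of $f(\cdot,y)$ along the geodesic from $z$ to $y$ yields $f(z,y) \geq \cosh d(x,z) - \cosh d(x,y)$, which rearranges to~(2).

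The main obstacle is the bookkeeping: one must apply (1) at the perturbed point $y_t$ rather than at $y$, exploit convexity in the \emph{second} argument (the only convexity available), and track signs carefully through the CAT$(-1)$ cosh-comparison inequality of Corollary~\ref{coshconvex}. The one-sided nature of upper hemicontinuity forces the use of $\limsup$ in the final step, but that is enough since only a lower bound on $f(z,y)$ is required.
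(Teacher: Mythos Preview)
Your proof is correct and follows essentially the same route as the paper: both directions use the same ingredients, and for $(1)\Rightarrow(2)$ you introduce the geodesic perturbation $y_t=(1-t)z\oplus ty$, split $0=f(y_t,y_t)$ via convexity in the second argument, bound $f(y_t,z)$ using hypothesis~(1) at $y_t$, control $\cosh d(x,y_t)$ via Corollary~\ref{coshconvex}, divide by $t$, and pass to the limit using upper hemicontinuity. The only differences are cosmetic (sign bookkeeping and the order of the chain), and your explicit use of $\limsup$ is in fact cleaner than the paper's phrasing.
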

\begin{proof}
If the statement $(2)$ holds, by Condition \ref{bifunction}, we get $f(z,y) \leq -f(y,z)$ and hence we get the statement $(1)$. Inversely, suppose the statement $(1)$ holds. Put $\tau_t = ty \op (1-t)z$ for $t \in \mathopen]0,1\mathclose[$. By Condition \ref{bifunction} and Lemma \ref{compact}, we get
\begin{align*}
0 
&= f(\tau_t,\tau_t)\\
&\leq tf(\tau_t,y) + (1-t)f(\tau_t,z)\\
&\leq tf(\tau_t,y) + (1-t)(\cosh d(x,\tau_t) - \cosh d(x,z))\\
&\leq tf(\tau_t,y)  + (1-t)(t\cosh d(x,y) + (1-t)\cosh d(x,z) - \cosh d(x,z))\\
&\leq tf(\tau_t,y) + (1-t)t(\cosh d(x,y) - \cosh d(x,z)).
\end{align*}
Dividing both sides by t, letting $t \to 0^+$ and using the upper hemicontinuity of $f$, we get 
\[
0 \leq f(z,y) + \cosh d(x,y) - \cosh d(x,z)
\]
and hence we get the statement $(2)$.
\end{proof}

\begin{Lem}\label{exists}
Let $X$ be a complete \textup{CAT$(-1)$} space having the convex hull finite property, $K$ a nonempty closed convex subset of $X$, and $C$ a nonempty $\Delta$-compact closed convex subset of $K$. Suppose that $f\colon K \times K \to \mathbb{R}$ satisfies \textup{Condition \ref{bifunction}}. Then there exists $z \in C$ such that
\[
0 \leq f(z,y) + \cosh d(x,y) - \cosh d(x,z)
\]
for all $x \in X$ and $y \in C$.
\end{Lem}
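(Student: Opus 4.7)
The plan is to obtain Lemma \ref{exists} as an immediate corollary of Lemma \ref{compact} and Lemma \ref{bifunctionslemma}. The quantifier structure of the conclusion must be read in parallel with that of Lemma \ref{compact}, namely ``for every $x \in X$, there exists $z \in C$ (depending on $x$) such that the displayed inequality holds for every $y \in C$.'' A single $z \in C$ cannot satisfy the inequality simultaneously for all $x \in X$ and all $y \in C$: fixing distinct $y, z \in C$ and moving $x$ far enough along the side of $z$ away from $y$ makes $\cosh d(x,z) - \cosh d(x,y)$ exceed the fixed real number $f(z,y)$, contradicting the bound. This forces the implicit $\forall x \, \exists z \, \forall y$ reading, consistent with the resolvent interpretation developed later in the paper.

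Concretely, I would fix an arbitrary $x \in X$ and apply Lemma \ref{compact} to obtain $z \in C$ satisfying
\[
f(y,z) \leq \cosh d(x,y) - \cosh d(x,z) \qquad (y \in C),
\]
which is precisely statement $(1)$ of Lemma \ref{bifunctionslemma} for this particular pair $(x,z)$. I would then feed $(x,z)$ into Lemma \ref{bifunctionslemma}, whose hypotheses are satisfied because $C$ is a nonempty $\Delta$-compact closed convex subset of $K$ and $f$ fulfils Condition \ref{bifunction}; the equivalence $(1) \Leftrightarrow (2)$ produces
\[
0 \leq f(z,y) + \cosh d(x,y) - \cosh d(x,z) \qquad (y \in C),
\]
which is the asserted inequality. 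Since $x$ was arbitrary, the conclusion follows.

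No substantive obstacle remains: the genuine technical content has already been dispatched upstream, namely the KKM-type finite-intersection argument resting on the convex hull finite property, Corollary \ref{coshconvex}, and the skew-symmetry clause (ii) of Condition \ref{bifunction} (inside Lemma \ref{compact}), together with the upper-hemicontinuity limit $t \to 0^+$ along the geodesic $\tau_t = ty \oplus (1-t)z$ and the convexity clause (iii) of Condition \ref{bifunction} (inside Lemma \ref{bifunctionslemma}). Consequently, the proof of Lemma \ref{exists} reduces to a two-line chaining of these two results once an arbitrary $x \in X$ has been fixed.
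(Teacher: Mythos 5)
Your proposal is correct and follows exactly the paper's own proof: fix $x$, invoke Lemma \ref{compact} to get statement $(1)$ of Lemma \ref{bifunctionslemma}, and apply the equivalence $(1)\Leftrightarrow(2)$ to conclude. Your remark that the statement must be read as ``for every $x$ there exists $z$ (depending on $x$)'' is also right and matches how the paper actually uses the lemma (and Lemma \ref{compact}) in the proof of Theorem \ref{WelldefinedResolvent}.
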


\begin{proof}
By Lemma \ref{compact}, there exists $z \in C$ such that
\[
f(y,z) \leq \cosh d(x,y) - \cosh d(x,z)
\]
for all $y \in C$. Further, by Lemma \ref{bifunctionslemma}, we get
\[
0 \leq f(z,y) + \cosh d(x,y) - \cosh d(x,z) 
\]
for all $y \in C$. Consequently, we obtain the desired result.
\end{proof}
We can prove that the resolvent of an equilibrium problem is well-defined.
\begin{Th}\label{WelldefinedResolvent}
Let $X$ be a complete \textup{CAT$(-1)$} space having the convex hull finite property and $K$ a nonempty closed convex subset of $X$.  Suppose that $f\colon K \times K \to \mathbb{R}$ satisfies \textup{Condition \ref{bifunction}}. Define a set-valued mapping $L_f \colon X \to 2^K$ by 
\[
L_fx = \left\{z \in K\varmid \inf_{y\in K}(f(z,y) + \cosh d(x,y) - \cosh d(x,z)) \geq 0\right\}
\]
for all $x \in X$. Then the following hold:
\begin{enumerate}
\item[\textup{(i)}] $D(L_f) = X$;
\item[\textup{(ii)}] $L_f$ is single-valued and firmly hyperbolically nonspreading;
\item[\textup{(iii)}]$\Equil f = F(L_f)$, and thus $\Equil f$ is closed and convex.
\end{enumerate}
\end{Th}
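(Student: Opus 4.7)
The plan is to establish the three parts in turn, reducing (i) to Lemma \ref{exists} via an exhaustion of $K$ by closed balls and a coerciveness argument, deriving (ii) by testing the defining inequality along a geodesic and taking a one-sided derivative at $0$, and obtaining (iii) from (ii) together with convex-combination arguments.

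For (i), fix $x \in X$ and set $C_r = \overline{B}(x, r) \cap K$, which is bounded, closed, and convex, hence $\Delta$-compact. Lemma \ref{exists} supplies $z_r \in C_r$ with
\[
f(z_r, y) + \cosh d(x, y) - \cosh d(x, z_r) \geq 0
\]
for every $y \in C_r$. I claim $d(x, z_r) < r$ for all sufficiently large $r$. Otherwise $d(x, z_r) = r \to \infty$ along some sequence; fixing any $y_0 \in K$ and invoking Condition \ref{bifunction} (ii) yields $f(y_0, z_r) \leq \cosh d(x, y_0) - \cosh d(x, z_r)$. Since $\cosh$ grows exponentially and $d(y_0, z_r) \geq d(x, z_r) - d(x, y_0) \to \infty$, this forces $f(y_0, z_r) / d(y_0, z_r) \to -\infty$, contradicting Lemma \ref{Mayer} for the lower semicontinuous convex function $f(y_0, \cdot)$. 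Once $d(x, z_r) < r$, for arbitrary $y \in K$ the point $\tau_t = t y \op (1-t) z_r$ lies in $K$ by convexity of $K$, and $d(x, \tau_t) \leq (1-t) d(x, z_r) + t d(x, y) < r$ for $t > 0$ sufficiently small by convexity of the distance, so $\tau_t \in C_r$. Substituting into the defining inequality, using convexity of $f(z_r, \cdot)$ to bound $f(z_r, \tau_t) \leq t f(z_r, y)$, Corollary \ref{coshconvex} to bound $\cosh d(x, \tau_t) \leq (1-t) \cosh d(x, z_r) + t \cosh d(x, y)$, and dividing by $t$ yields the inequality for $y$. Thus $z_r \in L_f x$ and $D(L_f) = X$.

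For (ii), let $z_1 \in L_f x_1$ and $z_2 \in L_f x_2$ with $z_1 \neq z_2$, set $a = d(z_1, z_2)$, and test the defining inequality for $z_1$ at $\tau_t = t z_2 \op (1-t) z_1$. Convexity of $f(z_1, \cdot)$ gives $f(z_1, \tau_t) \leq t f(z_1, z_2)$, and Lemma \ref{PL} bounds $\cosh d(x_1, \tau_t)$ by a $\sinh$-weighted combination of $\cosh d(x_1, z_1)$ and $\cosh d(x_1, z_2)$. Dividing by $t$, letting $t \to 0^+$, and multiplying through by $\sinh a / a$ yields
\[
\cosh a \cdot \cosh d(x_1, z_1) \leq \frac{\sinh a}{a} f(z_1, z_2) + \cosh d(x_1, z_2).
\]
The analogous computation with the indices exchanged, added to the above, absorbs $f(z_1, z_2) + f(z_2, z_1) \leq 0$ and produces
\[
(\cosh d(x_1, z_1) + \cosh d(x_2, z_2)) \cosh d(z_1, z_2) \leq \cosh d(x_1, z_2) + \cosh d(x_2, z_1),
\]
which is the firm hyperbolic nonspreading inequality. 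Taking $x_1 = x_2 = x$ forces $\cosh d(z_1, z_2) \leq 1$, so $z_1 = z_2$ and $L_f$ is single-valued.

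For (iii), the inclusion $\Equil f \subseteq F(L_f)$ is immediate: if $f(z, y) \geq 0$ for every $y$, then trivially $f(z, y) + \cosh d(z, y) - 1 \geq 0$, so $z \in L_f z$ and single-valuedness forces $z \in F(L_f)$. Conversely, if $z \in F(L_f)$, then for each $y \in K$ and $t \in \mathopen]0, 1\mathclose[$ testing the defining inequality at $\tau_t = t y \op (1-t) z$ and using convexity of $f(z, \cdot)$ yields $t f(z, y) \geq 1 - \cosh(t d(z, y))$; since the right-hand side vanishes to second order in $t$, dividing by $t$ and letting $t \to 0^+$ gives $f(z, y) \geq 0$. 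For closedness and convexity of $\Equil f$, I use the characterization $z \in \Equil f \iff f(y, z) \leq 0$ for every $y \in K$: the forward direction is Condition \ref{bifunction} (ii); for the reverse, convexity of $f(\tau_s, \cdot)$ at $\tau_s = s y \op (1-s) z$ gives $0 = f(\tau_s, \tau_s) \leq s f(\tau_s, y) + (1-s) f(\tau_s, z)$, so $f(\tau_s, z) \leq 0$ forces $f(\tau_s, y) \geq 0$, and upper hemicontinuity of $f(\cdot, y)$ as $s \to 0^+$ delivers $f(z, y) \geq 0$. This characterization is preserved under convex combinations by convexity of $f(y, \cdot)$ and under sequential limits by its lower semicontinuity, so $\Equil f$ is closed and convex. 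The main obstacle is the coerciveness argument in (i), where the exponential growth of $\cosh d(x, z_r)$ must be matched against the at-most-linear lower bound provided by Lemma \ref{Mayer}.
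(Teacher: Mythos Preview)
Your proof is correct and follows the same overall strategy as the paper: reduce (i) to Lemma~\ref{exists} on a bounded ball via Mayer's coercivity, obtain (ii) by differentiating the defining inequality along a geodesic through Lemma~\ref{PL} and adding the two resulting inequalities, and handle (iii) by the obvious inclusion together with a geodesic test.

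Two organizational differences are worth recording. In (i), the paper fixes a \emph{single} ball of radius $R$ centred at a point $a\in K$, chosen so that $f(w,a)+\cosh d(x,a)-\cosh d(x,w)\leq 0$ whenever $d(a,w)=R$, and then extends the solution $z_0$ from $C$ to $K\setminus C$ by a case analysis on whether $z_0$ lies on the boundary of $C$; your exhaustion by balls $C_r$ centred at $x$, with a contradiction showing $d(x,z_r)<r$ for large $r$, repackages the same coercivity idea but yields a cleaner extension step (once $z_r$ is strictly interior to $C_r$, no case split is needed). In (iii), the paper deduces closedness and convexity of $\Equil f=F(L_f)$ from the standard fact that a quasinonexpansive mapping has closed convex fixed-point set, whereas you prove it directly via the dual characterization $z\in\Equil f\iff f(y,z)\leq 0$ for all $y\in K$; your route is more self-contained, the paper's is shorter given that background result.
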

\begin{proof}
(i) Fix $x \in K$. We show that there exists $z \in K$ such that
\[
0 \leq f(z,y) + \cosh d(x,y) - \cosh d(x,z)
\]
for all $y \in K$. Fix $a \in K$. Then for $w \in K$, we get
\begin{align*}
 &f(w,a) + \cosh d(x,a) - \cosh d(x,w)\\
&\leq -f(a,w) + \cosh d(x,a) - \cosh d(x,w)\\
&=\cosh d(x,a) -(f(a,w)+\cosh d(x,w)).
\end{align*}
By Lemma \ref{Mayer}, there exists $c \geq 0$ such that
\[
\liminf_{d(a,w)\to\infty}\frac{f(a,w)}{d(a,w)} \geq -c.
\]
Then, we get 
\[
\liminf_{d(a,w)\to\infty}\frac{f(a,w)+\cosh d(x,w)}{d(a,w)}\geq -c + \liminf_{d(a,w)\to\infty}\frac{\cosh d(x,w)}{d(a,w)}=\infty
\]
and hence $f(a,w)+\cosh d(x,z) \to\infty$ as $d(a,w)\to\infty$. This implies that
\[
f(w,a) + \cosh d(x,a) - \cosh d(x,w) \to-\infty
\]
as $d(a,w)\to\infty$. Then, we can take large $R > 0$ such that
\[
f(w,a) + \cosh d(x,a) - \cosh d(x,w) \leq 0
\]
wherever $d(w,a) = R$ for $w \in K$. Let $C = \left\{w \in K\varmid d(w,a) \leq R\right\}$. Then $C$ is bounded, closed and convex. Hence $C$ is $\Delta$-compact. By Lemma \ref{exists}, there exists $z_0 \in C$ such that
\[
0 \leq f(z_0,y) + \cosh d(x,y) - \cosh d(x,z_0)
\]
for all $y \in C$. We next show
\[
0\leq f(z_0,y) + \cosh d(x,y)-\cosh d(x,z_0)
\]
for each $y \in K\setminus C$. Let $y\in K\setminus C$. Then, we have $d(a,y)>R$. Let 
\[
u_0=
\begin{cases}
a & (d(a,z_0)=R),\\
z_0 & (d(a,z_0) < R).
\end{cases}
\]
Then, we get $d(a,u_0) < R$. In fact, if $d(a,z_0)=R$, then we have $d(a,u_0)=d(a,a)=0$. On the other hand, if $d(a,z_0)<R$, then we have $d(a,u_0) = d(a,z_0) < R$. Since $d(a,y) > R$ and $d(a,u_0) < R$, we can take sufficiently small $t_0 \in \mathopen]0,1\mathclose[$ satisfying $t_0d(a,y)+(1-t_0)d(a,u_0) < R$. Then, we get
\[
d(a,t_0y\op(1-t_0)u_0) \leq t_0d(a,y) + (1-t_0)d(a,u_0) < R
\]
and hence $d(a,t_0y\op(1-t_0)u_0)<R$. Since $K$ is convex, we get $t_0y\op(1-t_0)u_0\in K$. This implies that $t_0y\op(1-t_0)u_0 \in C$. Therefore
\begin{align*}
0
&\leq f(z_0,t_0y\op(1-t_0)u_0)+ \cosh d(x,t_0y\op(1-t_0)u_0) - \cosh d(x,z_0)\\
&\leq t_0(f(z_0,y) + \cosh d(x,y)-\cosh d(x,z_0))\\
&\hspace{10pt} + (1-t_0)(f(z_0,u_0) + \cosh d(x,u_0)-\cosh d(x,z_0)).
\end{align*}
Further, we get 
\[
f(z_0,u_0) + \cosh d(x,u_0)-\cosh d(x,z_0) \leq 0.
\]
Indeed, if $d(a,z_0)=R$, we get 
\begin{align*}
f(z_0,u_0) + \cosh d(x,u_0) - \cosh d(x,z_0)
&=f(z_0,a) + \cosh d(x,a)-\cosh d(x,z_0) \leq 0.
\end{align*}
On the other hand, if $d(a,z_0)<R$, then $u_0=z_0$ and thus we get
\[
f(z_0,u_0) + \cosh d(x,u_0) - \cosh d(x,z_0) = 0.
\]
Then, we get
\begin{align*}
&f(z_0,y) + \cosh d(x,y) - \cosh d(x,z_0)\\
&\geq -\frac{1-t_0}{t_0}(\cosh d(z_0,u_0) + \cosh d(x,u_0)-\cosh d(x,z_0))\geq 0.
\end{align*}
Since $y \in K\setminus C$ is arbitrary, we get 
\[
f(z_0,y) + \cosh d(x,y) - \cosh d(x,z) \geq 0
\]
for all $y\in K$.

(ii) Let $x \in X$, $w \in K$ and $z \in L_fx$ with $w \neq z$. Putting $\tau_t = tw \op (1-t)z$ for $t \in \mathopen]0,1\mathclose[$, by Lemma \ref{PL}, we get
\begin{align*}
0
&\leq f(z,\tau_t) + \cosh d(x,\tau_t) - \cosh d(x,z)\\
&\leq tf(z,w) + \cosh d(x,\tau_t) - \cosh d(x,z)\\
&= tf(z,w) + \frac{1}{\sinh d(w,z)}(\cosh d(x,\tau_t)\sinh d(w,z) - \cosh d(x,z)\sinh d(w,z))\\
&\leq tf(z,w) \\
&\hspace{10pt}+ \frac{\cosh d(x,w)\sinh td(w,z) + \cosh d(x,z)\sinh(1-t)d(w,z) - \cosh d(x,z)\sinh d(z,w)}{\sinh d(z,w)}.
\end{align*}
Putting
\[
L(t) = \cosh d(x,w)\sinh td(w,z) + \cosh d(x,z)\sinh(1-t)d(w,z)
\]
and dividing both sides by $t > 0$, we get
\[
0 \leq f(z,w)+ \frac{1}{t\sinh d(z,w)}(L(t) - \cosh d(x,z)\sinh d(z,w))
\]
Letting $t \searrow 0$, we get
\begin{align*}
0
&\leq f(z,w) + \frac{1}{\sinh d(z,w)} \lim_{t \searrow 0}\frac{L(t)-\cosh d(x,z)\sinh d(z,w)}{t}\\
&= f(z,w) + \frac{1}{\sinh d(z,w)}\lim_{t \searrow 0}\frac{d}{dt}(L(t)-\cosh d(x,z)\sinh d(z,w))\\
&= f(z,w)\\
&\hspace{10pt}+\frac{d(z,w)}{\sinh d(z,w)}\lim_{t \searrow 0}\left(\cosh d(x,w)\cosh td(z,w) - \cosh d(x,z)\cosh(1-t)d(z,w)\right)\\
&= f(z,w) + \frac{d(z,w)}{\sinh d(z,w)}(\cosh d(x,w) - \cosh d(x,z)\cosh d(z,w)).
\end{align*}
Fix $x_1, x_2 \in X$. Let $z_1 \in L_fx_1$ and $z_2 \in L_fx_2$. We suppose $z_1 \neq z_2$. Then, we get
\[
0 \leq f(z_1,z_2) + \frac{d(z_1,z_2)}{\sinh d(z_1,z_2)}(\cosh d(x_1,z_2) - \cosh d(x_1,z_1)\cosh d(z_1,z_2)).
\]
Similarly, it holds that 
\[
0 \leq f(z_2,z_1) + \frac{d(z_1,z_2)}{\sinh d(z_1,z_2)}(\cosh d(x_2,z_1) - \cosh d(x_2,z_2)\cosh d(z_1,z_2)).
\]
Adding these inequalities, we have 
\begin{align*}
0 
&\leq f(z_1,z_2) + f(z_2,z_1)+ \frac{d(z_1,z_2)}{\sinh d(z_1,z_2)}\cosh d(x_1,z_2) - \cosh d(x_1,z_1)\cosh d(z_1,z_2)\\
&\hspace{10pt} + \frac{d(z_1,z_2)}{\sinh d(z_1,z_2)}(\cosh d(x_2,z_1) - \cosh d(x_2,z_2)\cosh d(z_1,z_2))\\
&\leq  \frac{d(z_1,z_2)}{\sinh d(z_1,z_2)}(\cosh d(x_1,z_2) - \cosh d(x_1,z_1)\cosh d(z_1,z_2) )\\
&\hspace{10pt}+ \frac{d(z_1,z_2)}{\sinh d(z_1,z_2)}(\cosh d(x_2,z_1) - \cosh d(x_2,z_2)\cosh d(z_1,z_2)).
\end{align*}
Since $t/(\sinh t)>0$ for $t >0$, we get
\[
(\cosh d(x_1,z_1) + \cosh d(x_2,z_2))\cosh d(z_1,z_2) \leq \cosh d(x_1,z_2) + \cosh d(x_2,z_1).
\]
Notice that this inequality obviously holds when $z_1=z_2$. Using this inequality, we show that $L_f$ is singleton. If $x = x_1 = x_2$, we get
\[
(\cosh d(x,z_1) + \cosh d(x,z_2))\cosh d(z_1,z_2) \leq \cosh d(x,z_2) + \cosh d(x,z_1).
\]
and hence $\cosh d(z_1,z_2) \leq 1$. This implies $z_1 = z_2$. Therefore $L_f$ is singleton. Further, $L_f$ is firmly hyperbolically nonspreading. 

(iii) Let $z \in \Equil f$. Then we get
\begin{align*}
&\inf_{y \in K}(f(z,y) + \cosh d(z,y) -\cosh d(z,z))\\
&= \inf_{y\in K}(f(z,y) + \cosh d(z,y) -1)\\
&\geq \inf_{y\in K}f(z,y) \geq 0
\end{align*}
and hence $z \in F(L_f)$. On the other hand, let $z \in F(L_f)$ and $w \in K$ with $L_fz \neq w$. Then we get
\begin{align*}
0
&\leq f(L_fz,w) + \frac{d(L_fz,w)}{\sinh d(L_fz,w)} (\cosh d(L_fz,w) - \cosh d(L_fz,z)\cosh d(z,w))\\
&\leq f(z,w) + \frac{d(z,w)}{\sinh d(z,w)}(\cosh d(z,w) - \cosh d(z,z)\cosh d(z,w))\\
&= f(z,w).
\end{align*}
This implies $z \in \Equil f$. Therefore we get $\Equil f = F(L_f)$. Further, since firmly hyperbolically nonspreading $L_f$ is quasinonexpansive, we obtain that $\Equil f$ is  closed and  convex.
\end{proof}

Let $f \colon K \times K \to \mathbb{R}$ satisfy Condition \ref{bifunction}. Then the resolvent of an equilibrium problem $L_f$ is defined by
\[
L_fx = \left\{z \in K \varmid \inf_{y \in K}(f(z,y) + \cosh d(x,y) - \cosh d(x,z)) \geq 0\right\}
\]
for all $x \in X$. By Theorem \ref{WelldefinedResolvent}, a mapping $L_f$ is single-valued.

\begin{Ack}
This work was partially supported by JSPS KAKENHI Grant Number JP21K03316.
\end{Ack}

\end{document}